\documentclass[11pt]{article}

\usepackage{amsmath, amssymb, amsthm, enumerate}
\usepackage{graphicx, caption, subcaption}
\usepackage{xcolor}
\usepackage{cancel} 
\usepackage[pagebackref]{hyperref}

\usepackage{xspace}

\usepackage{tikz}
\usetikzlibrary{arrows.meta,calc,decorations.markings}

\usepackage[margin=1.5in, marginparwidth=1.3in]{geometry}

\usepackage{titlesec}
\theoremstyle{plain}
\newtheorem{thm}{Theorem}[section]

\newtheorem{lemma}[thm]{Lemma}

\newtheorem{conj}[thm]{Conjecture}

\theoremstyle{definition}

\newtheorem{rmkx}[thm]{Remark}

\newcounter{bencomments}

\title{The Birthday Paradox for non-backtracking walks on regular graphs }  
\author{Benjamin Dozier \thanks{Department of Mathematics, Cornell University, \href{mailto:benjamin.dozier@cornell.edu}{\nolinkurl{benjamin.dozier@cornell.edu}}. Research supported in part by NSF Grant DMS-2247244 and the Simons Foundation.
}}

\begin{document}
\maketitle

\begin{abstract}
 We show a birthday paradox for random non-backtracking walk on regular graphs of degree at least $3$: such a walk of length $k$ has high probability of self-intersecting when $k$ is significantly greater than $\sqrt{n}$, where $n$ is the number of vertices of the graph.  This resolves a conjecture of Noga Alon and Yuval Peres for the fixed degree case.   
\end{abstract}



\section{Introduction}
\label{sec:intro}

The classical ``birthday paradox'' in elementary probability states that if we have $n$ objects and draw $k$ of them at random with replacement, uniformly and independently, we are likely to draw at least one object more than once provided that $k$ is significantly greater than $\sqrt n$.  In this paper we prove a version of this in which the objects are not drawn uniformly at random, but rather are the vertices of a random non-backtracking walk on a (regular) graph.

We consider (undirected) graphs without multiple edges or self-loops.   
When we speak of an edge, it will always come with an orientation.  
A \emph{non-backtracking walk} ($\mathrm{NBW}$) on such a graph is a sequence of edges such that (i) the endpoint of each edge (other than the last) is equal to the startpoint of the next, and (ii) no edge is followed by its opposite.   Such a walk has a \emph{self-intersection} if it hits some vertex more than once; a walk without self-intersection is called \emph{simple}.  We prove:

\begin{thm} \label{thm:main}
    Fix $d\ge 3$.  For any $\epsilon>0$, there exists a $C=C(d,\epsilon)$ such that if $\Gamma$ is a $d$-regular graph on $n$ vertices, and $k > C \sqrt{n}$, then for any edge $\delta$, the probability that a length $k$ random non-backtracking walk starting with $\delta$ is simple is less than $\epsilon$.   
  \end{thm}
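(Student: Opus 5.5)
The plan is to first reduce to a constant-probability statement via submultiplicativity, and then prove that constant bound by a second-moment (birthday) argument on collisions. For $k\ge 1$, let
\[
S(k)=\max_{\delta} \Pr\bigl[\text{the length $k$ NBW starting with $\delta$ is simple}\bigr].
\]
By the Markov property of the NBW on directed edges, if a walk of length $a+b$ is simple then its first $a$ steps and its last $b$ steps (started from the edge at time $a$) are both simple. Hence $S(a+b)\le S(a)S(b)$. It therefore suffices to find $C_0=C_0(d)$ with $S(C_0\sqrt n)\le 1/2$; then $S(mC_0\sqrt n)\le 2^{-m}$, and we take $C=C_0\lceil\log_2(1/\epsilon)\rceil+C_0$. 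Monotonicity of $S$ in $k$ handles the case where $k$ is not a multiple of $C_0\sqrt n$.

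To bound $S(C_0\sqrt n)$ I would use collisions between two halves of the walk. Take a walk of length $2k$ and let $e$ be its edge at time $k$. Conditionally on $e$, the future $Y_1,\dots,Y_k$ is an NBW started at $e$. The reversed past $X_1,\dots,X_k$ is a path ending at $\bar e$; each such path has the same conditional probability $(d-1)^{-k}$ times a factor depending only on where it starts, since every NBW path of length $k$ has probability $(d-1)^{-k}$. Let $Z=\#\{(i,j): X_i=Y_j\}$ be the number of cross-collisions. Simplicity forces $Z=0$, so I want $\Pr[Z>0]\ge 1/2$, and I would get it by Paley--Zygmund: $\Pr[Z>0]\ge (\mathbb E Z)^2/\mathbb E Z^2$.

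For the first moment, the occupation measures enter through
\[
\mathbb E Z=\sum_v \mathbb E[N_X(v)N_Y(v)],
\]
where $N_X(v)$ and $N_Y(v)$ count visits to $v$. The key inequality I am aiming for is $\mathbb E Z\gtrsim k^2/n$, which would follow from Cauchy--Schwarz ($\sum_v\mu(v)^2\ge 1/n$ for a probability measure $\mu$ on $n$ vertices) if $X$ and $Y$ were conditionally i.i.d.\ with a common occupation law. To get this, I would exploit the time-reversal symmetry of NBW on a regular graph: the uniform measure on directed edges is stationary, and reversal maps NBWs to NBWs. The aim is to compare the law of the reversed past with that of the future from $\bar e$, or alternatively to average over a random starting time or a random midpoint so that the biasing factor is controlled.

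For the second moment I would bound $\mathbb E Z^2$ by sums of return probabilities $\Pr[Y_{j'}=Y_j]$ and $\Pr[X_{i'}=X_i]$. Then I would split into two cases. If these return probabilities are large, the walk already self-intersects within a single half with probability at least $1/2$, and we are done directly. If they are small, the variance is $O\bigl((\mathbb E Z)^2+\mathbb E Z\bigr)$, which with $k=C_0\sqrt n$ and $C_0$ large gives $\Pr[Z>0]\ge 1/2$. This dichotomy, that either the walk is confined and self-intersects or it spreads out and the birthday heuristic applies, is where $d\ge 3$ enters: the walk has $d-1\ge2$ forward choices, so it cannot be forced along a single path.

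The main obstacle is the first-moment lower bound for a \emph{fixed} starting edge $\delta$, where no mixing or expansion is assumed. The conditional law of the past given the midpoint is tilted by $\Pr[\text{walk from }\delta\text{ reaches }e]$, which breaks the exact symmetry between $X$ and $Y$. My first attempt would be a counting reformulation: $\Pr[\text{simple}]=\#\{\text{simple NBW paths from }\delta\}/(d-1)^{k}$, with $\mathbb E Z$ rewritten as a count of pairs of paths sharing a vertex. I would then apply Cauchy--Schwarz over the $n$ possible meeting vertices, which needs no mixing input.
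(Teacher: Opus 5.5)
\textbf{There is a genuine gap: the core estimate is sketched, not proved.} Your submultiplicativity reduction is correct. In fact it only needs a uniform bound $S(C_0\sqrt n)\le 1-c$ for some $c>0$, and combined with the paper's theorem at $\epsilon=1/2$ it would improve the $C\sim\epsilon^{-1/2}$ from the paper's proof to $O_d(\log(1/\epsilon))$. But the reduction moves the whole theorem into one claim: uniformly over all $d$-regular graphs and all start edges, the walk self-intersects by time $C_0\sqrt n$ with probability bounded below. For that claim you give only a heuristic.

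\emph{The first moment.} The bound $\mathbb{E}Z\gtrsim k^2/n$ is unproved. Conditioned on the midpoint edge $e$, the past is a uniformly random NB path from $\delta$ to $e$ (a bridge), while the future is a free NBW from $e$. Cauchy--Schwarz, $\sum_v\mu(v)^2\ge 1/n$, needs the same measure on both sides, and $\sum_v\mu(v)\nu(v)$ has no lower bound when $\mu\ne\nu$. The counting reformulation does not fix this. A collision count has the form $\sum_v \#\{\text{paths }\delta\to v\}\cdot\#\{\text{NB loops at }v\text{ compatible with the arrival edge}\}$, not $\sum_v A(v)^2$. The NBW is not reversible on vertices, so there is no analogue of the identity $p_{2t}(v,v)=\sum_w p_t(v,w)^2$ that makes such bounds automatic for simple random walk. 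Without mixing input, "Cauchy--Schwarz over meeting vertices" has nothing to act on.

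\emph{The second moment.} This is where the approach actually breaks. Your dichotomy is stated in terms of averaged return probabilities, but collision behaviour depends on the trajectory. The step "large return probabilities $\Rightarrow$ a half self-intersects with probability $\ge 1/2$" confuses a large expected number of self-intersections with a large probability of at least one. Deducing the latter is the same Paley--Zygmund problem, so that branch is circular.

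The failure is not just formal. Take an expander on most vertices. Attach to it, by two edges and at distance about $\tfrac18\log_{d-1}n$ from $\delta$ along a tree-like stretch, an expander gadget on $s=n^{3/4}$ vertices. The walk then enters the gadget with probability $p\approx n^{-1/8}$ and stays for $\gg k$ steps. With $k=C_0\sqrt n$, the gadget contributes about $p\,k^2/s$ to $\mathbb{E}Z$ and $p\,(k^2/s)^2$ to $\mathbb{E}Z^2$, and both dominate. So $(\mathbb{E}Z)^2/\mathbb{E}Z^2\approx p\to 0$ and Paley--Zygmund gives nothing. Meanwhile the "large return probabilities" are large only because of a rare event. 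The missing idea is a way to handle such heterogeneity for a fixed start edge with no mixing assumption.

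\emph{How the paper avoids this.} It uses no moment estimates at all. It passes to plans, i.e.\ injective non-backtracking successor maps (Lemma~\ref{lemma:plan-reduction}), and perturbs a simple plan at a single vertex. Injectivity forces the $L$-step tails of non-returning $L$-simple deviations to be edge-disjoint, so there are at most $nd/L$ of them. An excursion-reversal involution shows at most half of the middle-returning deviations are $L$-simple (Lemma~\ref{lemma:packing-reversing}). It then double-counts with multiplicity at most $R_d(L+1)$ (Lemma~\ref{lemma:plan-ratio}). This is a deterministic counting argument valid for every start edge, which is exactly what your probabilistic route lacks.
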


  \begin{rmkx}
    For $d=2$ the statement is false.  For instance, if $\Gamma$ is a cycle of length $n$, then all NBWs of length $k<n$ are simple.  
  \end{rmkx}

  \begin{rmkx}
    If we remove the non-backtracking assumption, a much stronger statement is true.  In this case, at each step the walk has a definite chance of repeating the previous edge and thus producing a self-intersection.  Hence the relevant probability is related to a geometric random variable, and thus $C$ can be chosen (depending on $d,\epsilon$) so that the result holds for any $k>C$.  
  \end{rmkx}

  \begin{rmkx}
    The result is sharp in the sense that if we replace $\sqrt{n}$ in the statement by some function $f(n)$ with $f(n) = o(\sqrt n)$, then it becomes false.  This can be seen by considering (uniformly) \emph{random} $d$-regular graphs.  For closed loops, this is shown in \cite[Theorem 1.1]{ds}; as mentioned there, 
     for walks the same result holds and the argument is considerably easier (using the strategy of ``build both the walk and the graph step-by-step'').  
   \end{rmkx}

   Noga Alon and Yuval Peres conjectured the following more general result; our theorem above is the fixed degree case. 

  \begin{conj}[Alon-Peres]
    For any $\epsilon>0$, there exists a $C=C(\epsilon)$ such that if $\Gamma$ is a graph on $n$ vertices with all degrees at least $3$, and $k > C \sqrt{n}$, then for any edge $\delta$, the probability that a length $k$ random non-backtracking walk starting with $\delta$ is simple is less than $\epsilon$.   
  \end{conj}

  One could also explore analogs of these statements for (non-backtracking) \emph{closed loops}, rather than walks.

     \paragraph{Sketch of proof.}

  It turns out to be useful to do pre-sampling, i.e. work with walks with additional information, namely data that gives for each edge, a new edge to follow after that one; we call this a plan.  The ratio of simple plans to all plans is equal to the analogous ratio for walks.  Given a plan whose walk is simple, we produce many slight deviations, many of which will have to self-intersect.  In fact, many of the deviating paths, if simple, must return rather quickly to the original path, since otherwise, they would be forced to be disjoint (here is where it is important to consider plans, rather than plain walks).  And then of these deviations that eventually return to the original path, an excursion reversal argument shows that at least (approximately) half must self-intersect.

  The above produces for each simple plan, many self-intersecting ones.  Since the excursions are relatively short, each self-intersecting plan can only arise like this in a few ways.  Putting these facts together gives many more self-intersecting plans than simple ones.      

  \paragraph{Prior work on NBW and self-intersection.}
  Non-backtracking random walk on regular graphs has been studied in \cite{ABLES,LS,LP} among other papers.  Yadin has studied simple walks on graphs of large girth \cite{yadin}.  The relative number of simple and self-intersecting closed loops on random regular graphs plays an important role in the study of their spectral gap; see \cite{BS, Friedman, Friedman2008}.   In \cite{ds}, the author and Sapir show a birthday paradox for closed loops on random regular graphs.

  The birthday paradox in the related context of Markov processes is relevant to the effectiveness of the Pollard rho algorithm for discrete logarithms.  Results in this context have been proved in \cite{mv06, rho}.
  
After completing a draft of this paper, the author learned of independent work \cite{ev} on related questions, posted to the arxiv a few weeks before.  They give an upper bound on self-intersection time in a more general setting, but need an additional $\log n$ factor in their bound.  They also prove a bound without the $\log$ factor, but under the assumption that the graphs are fixed degree expanders.

  \paragraph{AI use and autoformalization.}
  The case of degree $3$ was proved by GPT 5.6 Sol Ultra, in about 2 hours of reasoning time.  
  That proof was digested by the author, who generalized it to all degrees, requiring some additional work.  The paper was written entirely by the author, who developed the organization, terminology, notation, exposition, and way that corner cases are handled.  But the key ideas of using ``plans'' (the author's terminology) to get disjointness of deviations, as well as the excursion reversal argument, were found by the AI.  
  The paper was proofread by the AI, and the author made several minor corrections based on the resulting feedback.  The figure was produced by the AI based on a hand-drawn sketch of the author.  

  Using Codex, the main theorem and proof have been (auto)formalized in Lean 4; see ancillary files.  These will also be made available on the \verb+Palomar+ registry.

  \paragraph{Acknowledgements.}
I am grateful to Noga Alon for telling me about his conjecture with Peres, and for later pointing out to me the related work \cite{ev}.  
I thank Ariel Yadin and Jenya Sapir for interesting discussions about the problem.  
  
\section{Proof of Theorem \ref{thm:main}}
\label{sec:proof}

Let $N(k,\delta)$ denote the number of length $k$ NBWs on $\Gamma$ starting with $\delta$, and let $N_{simp}(k,\delta)$ denote the number of those that are simple.  

\paragraph{Plans and their associated walks.}
A key idea in the proof is to consider walks with some additional information about where to go, even at unvisited vertices.   
Given an edge $\delta$, denote by $\bar \delta$ the edge with the reverse orientation.
We define a \emph{startless plan} on $\Gamma$ to be 
a function $\phi$ from the set of edges to itself such that (i) for each edge $(u,v)$, $\phi(u,v)$ equals $(v,w)$ for some vertex $w\ne u$, and (ii) $\phi$ is injective.   The map $\phi$ should be thought of as determining where a walker goes next given the current edge. Condition (i) corresponds to non-backtracking, while (ii) means that a single edge also determines a full past trajectory.  One can alternatively think of a startless plan as specifying for each vertex $v$ a derangement $\pi$ (permutation without fixed points) on the $d$ incoming edges; the restriction of $\phi$ to the edges ending at $v$ is then given by composing $\pi$ with the orientation reversing involution on edges.   


A \emph{plan} $\rho$ is a pair $(\phi, \delta)$ where $\phi$ is a startless plan and $\delta$ is an edge, the \emph{start}.  This determines an \emph{associated NBW} $\gamma_k(\rho)$ (of any length $k$ we choose) given by starting with $\delta$ and then repeatedly applying $\phi$.  We also define $\gamma(\rho)$ to be the infinite sequence produced like this.  We define a \emph{$k$-simple plan} to be one for which the associated walk of length $k$ is simple.
Let $\Omega^\delta$ denote the set of plans with start $\delta$, and let $\Omega^\delta_{simp}(k)$ be the set of $k$-simple plans with start $\delta$.

A crucial fact is that the ratio of the number of $k$-simple plans to all plans is exactly the probability that we are interested in.  The reason is that both can be thought of as encoding a choice of length $k$ walk, together with the same total number of other choices associated with edges that are not involved in the walk (for walks that are self-intersecting, this encoding is less direct).  We state this as a lemma, and give a more computational proof: 

\begin{lemma} \label{lemma:plan-reduction}
  For $k < n$:
    \begin{align}
    \frac{|\Omega^\delta_{simp}(k)|}{|\Omega^\delta|} = \frac{N_{simp}(k,\delta)}{N(k,\delta)}. 
\end{align}
\end{lemma}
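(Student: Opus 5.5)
We compute both sides by direct counting. A startless plan is the same as an independent choice, at each vertex $v$, of a derangement of the $d$ incoming edges at $v$. Writing $D_d$ for the number of derangements of $d$ objects, we get $|\Omega^\delta| = D_d^{\,n}$.

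Next we need to know how these derangements act on outgoing choices. Fix a vertex $v$ and an incoming edge $e=(u,v)$. The value $\phi(e)$ ranges over the $d-1$ edges $(v,w)$ with $w\neq u$. The number of derangements at $v$ realizing any one prescribed value is $D_d/(d-1)$, by symmetry: relabelling the other $d-1$ incoming edges permutes these options transitively while preserving the derangement property. On the other side, $N(k,\delta)=(d-1)^{k-1}$, since a NBW is determined by $k-1$ successive choices, each among $d-1$ options.

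Now count $k$-simple plans by sorting them according to their associated walk $\gamma_k(\rho)=(\delta=e_1,\dots,e_k)$. Suppose this walk is simple. Then its vertices $v_0,v_1,\dots,v_k$ are distinct, and since $k<n$ there are $k+1\le n$ of them. The plan produces this walk exactly when $\phi(e_i)=e_{i+1}$ for $i=1,\dots,k-1$. Each such condition constrains the derangement at the vertex $v_i$ only, and because the walk is simple these vertices $v_1,\dots,v_{k-1}$ are distinct. The constraints are therefore independent single-vertex constraints, with one prescribed value at each of $k-1$ distinct vertices. The number of plans whose walk is a given simple walk is thus
\[
\Bigl(\frac{D_d}{d-1}\Bigr)^{k-1} D_d^{\,n-k+1}.
\]
This equals $|\Omega^\delta|\,(d-1)^{-(k-1)} = |\Omega^\delta|/N(k,\delta)$. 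Summing over the $N_{simp}(k,\delta)$ simple walks gives $|\Omega^\delta_{simp}(k)| = N_{simp}(k,\delta)\,|\Omega^\delta|/N(k,\delta)$, which is the claimed identity.

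The one step needing real care is the uniformity claim that exactly $D_d/(d-1)$ derangements realize a given prescribed value. It suffices to check the case of a single prescribed value at each vertex, which is all that arises for simple walks. Label the incoming edges at $v$ by $1,\dots,d$ and take the incoming edge to be $1$. The derangement $\pi$ sends $1$ to some $j\neq 1$, and conjugating by transpositions of $\{2,\dots,d\}$ shows that every $j$ occurs equally often. The outgoing edge is then the reversal of the edge labelled $j$, which is a bijection onto the $d-1$ allowed outgoing edges.

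Self-intersecting walks never need to be counted, because the identity only compares simple plans with all plans. Note also that the hypothesis $k<n$ is not really needed: a simple walk automatically has at most $n$ vertices, so it plays no role beyond guaranteeing that the counts make sense.
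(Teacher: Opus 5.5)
Your proof is correct and is essentially the paper's argument: the paper likewise counts $(R_d')^{k-1}R_d^{\,n-(k-1)}$ plans for each simple walk, with $R_d'=R_d/(d-1)$ by the same symmetry, and divides by the $R_d^{\,n}$ total plans to obtain $N_{simp}(k,\delta)/(d-1)^{k-1}$. Your side remark that the hypothesis $k<n$ is not needed is also right, since for $k\ge n$ there are no simple walks and both sides are $0$.
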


\begin{proof}
    Let $R_d$ denote the number of derangements (permutations without fixed points) on the set $\{1,\ldots,d\}$.  Fixing some $1<j\le d$, let $R_d'$ be the number of these derangements that map $j$ to $1$ (note by symmetry, the number doesn't depend on the choice of $j$).   Since there are $d-1$ choices of this $j$, we have $R_d = (d-1)R_d'$.  
    We compute: 
    \begin{align}
    \frac{|\Omega^\delta_{simp}(k)|}{|\Omega^\delta|} = \frac{N_{simp}(k,\delta) \left( R_d' \right)^{k-1} \left(R_d\right)^{n-(k-1)}}{\left(R_d\right)^{k-1} \left(R_d\right)^{n-(k-1)}}  = \frac{N_{simp}(k,\delta)}{(d-1)^{k-1}} \\ = \frac{N_{simp}(k,\delta)}{N(k,\delta)}. 
    \end{align}

\end{proof}

\paragraph{Deviating from the plan.}
From here, to establish Theorem \ref{thm:main}, for each $k$-simple plan, we will consider many deviations away from it, producing many new walks, many of which will turn out to self-intersect (actually, we will only prove this for the \emph{average} over all $k$-simple plans with a fixed start).  The reason that the extra information of a plan is helpful is that it will force certain deviations to be disjoint.  

Our notion of deviation concerns changing a plan at a single vertex.  When $d=3$, there are only two choices for the data of a plan at a particular vertex, and thus there is a unique way to change the plan there.  However, when $d>3$, there are more choices, but what $\phi$ does to edges not along the associated walk will not be of much concern.  Nevertheless, it is useful to pick specific values for these (largely irrelevant) choices, which motivates the following definition; this will come up again in \eqref{item:middle} of proof of Lemma \ref{lemma:packing-reversing}.  
Suppose we have chosen a vertex $v$, a derangement $\pi$ of the incoming edges at $v$, and a pair $(\delta, \beta)$, where $\delta$ and $\beta$ are incoming and outgoing edges at $v$, respectively, with $\delta \ne \bar \beta$.  We pick arbitrarily a \emph{distinguished derangement contingency}, which is a derangement $\pi'$ of the incoming edges of $v$ such that $\overline{\pi'(\delta)}=\beta.$ (This choice will actually only affect parts of the plan that are not relevant to the associated walk, which is why the word ``contingency'' is used.)  

Given a $k$-simple plan $\rho = (\phi,\delta)$, consider the edges $\delta=\delta_1, \delta_2, \ldots $ of the associated walk $\gamma(\rho)$.  We define a \emph{deviation} of $\rho$ to be a pair $(\rho',\rho)$, where the plan $\rho'=(\phi',\delta)$ is obtained from $\phi$ by changing the derangement at the endpoint of $\delta_i$ for some $i=1,\ldots, k-1$ to the distinguished derangement contingency corresponding to $(\delta_i, \beta)$, for some $\beta\ne \delta_{i+1}$. 
Note that there are $(k-1)(d-2)$ deviations of a $k$-simple plan.  

We say a deviation $(\rho',\rho)$ obtained from changing $\phi(\delta_i)$ is $L$-\emph{simple} if its associated length $i+L$ walk is simple.  We want to choose $L$ large enough that many deviations are not $L$-simple.  

\begin{lemma} \label{lemma:packing-reversing} 
    There exists $C'$ (depending on $d$) such that if $L>C'n/k$ and $\delta$ is an edge of $\Gamma$, then at least $1/4$ proportion of the deviations of $k$-simple plans starting at $\delta$ are not $L$-simple.  \footnote{In this lemma, it is important that $d\ge 3$; when $d=2$, there are no deviations, and so the proportion is not meaningful.}
\end{lemma}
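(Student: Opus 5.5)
Fix a $k$-simple plan $\rho=(\phi,\delta)$ with walk edges $\delta_1,\dots,\delta_k$ visiting the distinct vertices $v_0,v_1,\dots,v_k$. For a deviation at index $i$ with new outgoing edge $\beta$, write $\gamma'$ for the walk of $\rho'$. It agrees with $\gamma(\rho)$ up to $\delta_i$, then follows $\beta$, and afterwards follows $\phi$ except at the single changed vertex $v_i$. The plan is to classify the deviations of $\rho$ into those whose walk stays off $\{v_0,\dots,v_k\}$ for $L$ steps after index $i$ and those that return. I will show that the first kind are few on average, and that among returning ones about half self-intersect.

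\textbf{Step 1 (packing).} Suppose the deviating walk after $\beta$ avoids the original walk's vertex set for $L$ steps and is simple. Its $L$ new vertices are reached by iterating $\phi$ itself, since $v_i$ is not revisited. Because $\phi$ is injective, the forward $\phi$-orbits of distinct starting edges $\beta$ are either disjoint or one contains the other. If one contained the other, then iterating backward from a shared edge would lead back along the original walk; one shows this only happens through an edge ending on the walk, which is excluded. Hence the non-returning simple deviations use pairwise disjoint sets of $L$ vertices outside the original walk. There are at most $n/L$ of them, and with $L>C'n/k$ this is at most $k/C'$. This is a small fraction of the $(k-1)(d-2)$ deviations. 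The packing holds for each plan individually.

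\textbf{Step 2 (excursion reversal).} The remaining deviations either self-intersect before index $i+L$, which is fine, or leave at $v_i$ via $\beta$, wander outside, and first return to the original walk at some vertex $v_j$ while still simple so far. If $j<i$, the walk already self-intersects at $v_j$. If $j>i$, the excursion is an alternative path $P$ from $v_i$ to $v_j$, and the deviation may stay simple. To handle this I would build an involution-like pairing. Take the plan $\rho''$ obtained by rerouting the original walk through the reversed excursion: enter $P$ in reverse at $v_j$ and come back to $v_i$. Then a deviation at $v_j$ (or at the appropriate vertex) in the reversed direction returns to an earlier vertex, hence self-intersects. Formally, I would define a map on pairs (plan, deviation) that swaps the segment $v_i\to v_j$ of the walk with $P$ reversed. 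It must preserve $k$-simplicity, map forward returns to backward returns, and be injective up to bounded multiplicity. Averaging over all $k$-simple plans with start $\delta$, this shows that at least about half of the returning, length-$L$-completed deviations are not $L$-simple. Combining with Step 1 gives at least $\tfrac12(1-o(1)) - O(1/C')\ge 1/4$ for $C'$ large depending on $d$.

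\textbf{Main obstacle.} The hard part is making the reversal map precise. Reversing a segment changes which edges $\phi$ must map to which. The distinguished derangement contingencies are presumably designed so that the modified startless plan is canonically determined at the two junction vertices, and the map must be a bijection between "forward return" and "backward return" configurations. I would first try to define the map purely on the walk data, treating the rest of the plan as free choices, and count both sides via the same derangement counts as in Lemma~\ref{lemma:plan-reduction}. I expect the corner cases to need separate care: the return occurring beyond index $k$, or $j$ within $L$ of $k$.
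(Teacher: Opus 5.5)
Your architecture matches the paper's. You split deviations into non-returning and returning ones, bound the non-returning $L$-simple ones per plan by packing via injectivity of $\phi$, and pair up the returning ones by reversing the excursion. Step~1 is essentially right, with one slip: injectivity gives \emph{edge}-disjointness of the post-deviation segments, not vertex-disjointness, since two segments may pass through the same off-walk vertex along different edges. The bound is therefore $nd/L<dk/C'$ rather than $n/L$, which is harmless because $C'$ may depend on $d$.

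The genuine gap is Step~2, the heart of the lemma, which you leave unconstructed. Your sketch never says what happens to the base plan off the walk, and ``swapping the segment $v_i\to v_j$ of the walk with $P$ reversed'' suggests altering $\gamma_k(\rho)$, which is exactly what must not happen. The missing idea is to keep the base walk fixed and change the base plan only at the off-walk vertices of the excursion $e_1,\dots,e_j$ (from $v_i$ to the first return vertex $w=v_\ell$), replacing the derangement there by its inverse. Since $\phi(e)=\overline{\pi(e)}$, this turns each transition $e_m\mapsto e_{m+1}$ into $\overline{e_{m+1}}\mapsto\overline{e_m}$, even if the excursion revisits a vertex. Meanwhile $\gamma_k(\tilde\rho)=\gamma_k(\rho)$, so $\tilde\rho$ is again a $k$-simple plan with start $\delta$. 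Then $\tilde\rho'$ is the deviation of $\tilde\rho$ at $w$ using the distinguished contingency with $\tilde\phi'(\delta_\ell)=\overline{e_j}$.

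To finish, you must check three things. First, this is a legal deviation, which needs $\overline{e_j}\ne\delta_{\ell+1}$ and $w\ne v_0,v_k$. These are the real corner cases, not the ones you list: a return to $v_0$ is automatically non-simple, and returns to $v_k$ number at most $d-1$ per plan. That is negligible only for large $k$; for small $k$ you take $C'$ large so that $L>n$, so ``$\tfrac12(1-o(1))$'' is not uniform without this. Second, the reversed excursion first meets the walk at $v_i$. Third, applying the construction twice returns $(\rho',\rho)$. Then $R$ is an involution, and in each pair one walk re-enters $\gamma_k(\rho)$ at the already-visited vertex $v_{\min(i,\ell)}$, so at most one member is $L$-simple.

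Exactness matters for the stated constant. A map that is only ``injective up to bounded multiplicity'' $M$ guarantees a non-$L$-simple fraction of about $1/(M+1)$ among returning deviations, which is below $1/4$ once $M\ge 3$.
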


Note that deviations of two different plans could result in the same plan, but these are regarded as distinct deviations, since a deviation was defined as a pair $(\rho', \rho)$, i.e. it remembers the plan it deviated from.  

\begin{proof}
    There are two types of deviations of a $k$-simple plan $\rho$, and we will handle them with two different methods, a packing argument and an ``excursion reversal'' argument: 
    \begin{enumerate}[(A)]
    \item  \label{item:non-return} \emph{Non-returning} deviations $\rho'$ of $\rho$, i.e. those for which the associated walk does \emph{not} return to a vertex of $\gamma_k(\rho)$ within $L$ steps of deviating.  We will bound these with a packing argument using disjointness.

      Consider non-returning deviations that are $L$-simple, which in particular implies that the $L$ steps after deviating are on distinct edges.  Note that a plan determines the walk once a single edge along it is known (here we use that the function $\phi$ in the definition of plan is assumed to be injective). It follows that any two non-returning deviations $\rho',\rho''$ of $\rho$ must have the pieces consisting of the last $L$ edges of their associated walks disjoint.  Note that there are $nd$ edges of $\Gamma$, so we can have at most $nd/L$ such disjoint pieces.  So for $L>C'n/k$, we get that the number of these deviations is at most $\frac{nd}{C'n/k} =(d/C') k$, and choosing $C'>8d$, we get that there are fewer than $k/8$ such deviations of $\rho$.  In total there are $(k-1)(d-2)$ deviations of $\rho$, so by taking $C'$ a bit larger, we find that the proportion of deviations of $\rho$ that are non-returning and $L$-simple is less than $1/8$.  And so, a fortiori, considering \emph{all} possible $k$-simple $\rho$ with start $\delta$, we see that the proportion of deviations that are non-returning and $L$-simple is again at most $1/8$.  


    \item \label{item:return} \emph{Returning} deviations $\rho'$, i.e. those for which $\gamma(\rho')$ returns to a vertex $w$ of $\gamma_k(\rho)$ within $L$ steps of deviating (and we take $w$ to be the first such vertex along $\gamma(\rho')$).    We will separate into three cases based on $w$; the first two are unusual corner cases that can be dealt with quickly. 

      \begin{enumerate}[(i)] 
      \item \label{item:start} Suppose that $w$ is the startpoint of the start edge $\delta$.   Such a deviation is clearly not $L$-simple, and so no argument is needed here.

      \item \label{item:end} Suppose that $w$ is the endpoint of $\delta_k$.   Note that for fixed plan $\rho$, such a $\rho'$ is determined by the edge through which it comes back to $w$.  Hence there are at most $d-1$ such deviations.  The total number of deviations of $\rho$ is $(k-1)(d-2)$, so this accounts for at most $2/(k-1)$ proportion of deviations.
        For $k> 17$, this proportion is less than $1/8$. For $k\le 17$, note that we can take $C'>17$ and so $L>C'n/k \ge n$, and thus there are no $L$-simple deviations, since there are only $n$ vertices in our graph; in this case the proportion of such deviations that are also $L$-simple is zero.

      \item \label{item:middle} Suppose $\rho' $ is \emph{middle-returning}, i.e. returning but not of type \eqref{item:start} or \eqref{item:end}.  

      To handle these we define an ``excursion reversing'' involution $R$ on the set of all middle-returning deviations of $k$-simple plans starting at $\delta$.  This map will have the property that for each $(\rho',\rho)$, it cannot be that both $(\rho',\rho)$, $R(\rho',\rho)$ are $L$-simple.  The idea is that the plan is reversed along the edges of the ``excursion''.  See Figure \ref{fig:excursion-reversal}.  

      \begin{figure}
        \centering

\begingroup

\definecolor{excursionblue}{HTML}{0270FF}
\definecolor{reversalred}{HTML}{ED3725}
\definecolor{inverseyellow}{HTML}{FFD60A}

\tikzset{
  line cap=round,
  line join=round,
  base edge/.style={draw=black!72,line width=0.58pt},
  direction/.style={
    postaction={decorate},
    decoration={markings,
      mark=at position 0.58 with
        {\arrow{Stealth[length=1.7mm,width=1.25mm]}}}
  },
  upper route/.style={draw=excursionblue,line width=1.08pt},
  lower route/.style={draw=reversalred,line width=1.08pt},
  base vertex/.style={
    circle,draw=black!78,fill=black,line width=0.48pt,
    inner sep=0pt,minimum size=2.45pt
  },
  upper vertex/.style={
    circle,draw=excursionblue,fill=excursionblue,line width=0.45pt,
    inner sep=0pt,minimum size=3.15pt
  },
  upper interior/.style={
    circle,draw=black,fill=black,line width=0.45pt,
    inner sep=0pt,minimum size=2.75pt
  },
  lower vertex/.style={
    circle,draw=reversalred,fill=reversalred,line width=0.45pt,
    inner sep=0pt,minimum size=3.15pt
  },
  inverse vertex/.style={
    circle,draw=inverseyellow,fill=inverseyellow,
    line width=0.45pt,inner sep=0pt,minimum size=3.7pt
  },
  dart label/.style={font=\small,inner sep=1pt},
  path label/.style={font=\small,inner sep=1.5pt}
}

\begin{tikzpicture}[x=1cm,y=1cm]

\coordinate (t0)   at (0,0);
\coordinate (t1)   at (1.05,0);
\coordinate (t2)   at (2.15,0);
\coordinate (tim)  at (3.18,0);
\coordinate (ti)   at (4.23,0);
\coordinate (tlm)  at (8.00,0);
\coordinate (tw)   at (9.05,0);
\coordinate (tn)   at (10.10,0);
\coordinate (tend) at (11.15,0);

\coordinate (te1) at (5.08,0.90);
\coordinate (te2) at (5.98,1.30);
\coordinate (te3) at (7.28,1.30);
\coordinate (te4) at (8.20,0.90);

\node[path label,text=excursionblue,anchor=east] at (-0.18,-0.12)
  {$\gamma(\rho')$};

\draw[base edge,direction] (t0) -- (t1);
\draw[base edge,direction] (t1) -- (t2);
\draw[base edge] (t2) -- (2.46,0);
\node[font=\small] at (2.67,0.02) {$\cdots$};
\draw[base edge] (2.90,0) -- (tim);
\draw[base edge,direction] (tim) -- (ti);
\draw[base edge] (ti) -- (5.30,0);
\node[font=\small] at (6.12,0.02) {$\cdots$};
\draw[base edge] (6.94,0) -- (tlm);
\draw[base edge,direction] (tlm) -- (tw);
\draw[base edge,direction] (tw) -- (tn);
\draw[base edge,direction] (tn) -- (tend);

\draw[upper route] ([yshift=-2.65pt]t0) -- ([yshift=-2.65pt]t2);
\draw[upper route,densely dotted] ([yshift=-2.65pt]t2) -- ([yshift=-2.65pt]tim);
\draw[upper route]
  ([yshift=-2.65pt]tim) -- ([xshift=-4pt,yshift=-2.65pt]ti)
  .. controls +(.06,0) and +(-.05,-.08) .. (ti);

\draw[upper route,direction]
  (ti) -- node[path label,text=excursionblue,sloped,above=2pt] {$e_1$} (te1);
\draw[upper route,direction] (te1) -- (te2);
\draw[upper route,densely dotted] (te2) -- (te3);
\draw[upper route,direction] (te3) -- (te4);
\draw[upper route,direction]
  (te4) -- node[path label,text=excursionblue,sloped,above=2pt] {$e_j$} (tw);

\foreach \p in {t0,t1,t2,tim,tlm,tn,tend}
  \node[base vertex] at (\p) {};
\foreach \p in {t0,t1,tim}
  \node[base vertex,fill=black] at (\p) {};
\node[upper vertex] at (ti) {};
\node[base vertex,fill=black] at (tw) {};
\foreach \p in {te1,te2,te3,te4}
  \node[upper interior] at (\p) {};

\node[dart label,above=2.5pt] at ($(t0)!0.5!(t1)$) {$\delta_1$};
\node[dart label,above=2.5pt] at ($(t1)!0.5!(t2)$) {$\delta_2$};
\node[dart label,above=2.5pt] at ($(tim)!0.5!(ti)$) {$\delta_i$};
\node[dart label,above=2.5pt] at ($(tlm)!0.5!(tw)$) {$\delta_\ell$};
\node[dart label,below=5pt] at (tw) {$w$};

\coordinate (b0)   at (0,-2.75);
\coordinate (b1)   at (1.05,-2.75);
\coordinate (b2)   at (2.15,-2.75);
\coordinate (bim)  at (3.18,-2.75);
\coordinate (bi)   at (4.23,-2.75);
\coordinate (blm)  at (8.00,-2.75);
\coordinate (bw)   at (9.05,-2.75);
\coordinate (bn)   at (10.10,-2.75);
\coordinate (bend) at (11.15,-2.75);

\coordinate (be1) at (5.08,-1.85);
\coordinate (be2) at (5.98,-1.45);
\coordinate (be3) at (7.28,-1.45);
\coordinate (be4) at (8.20,-1.85);

\node[path label,text=reversalred,anchor=east] at (-0.18,-2.87)
  {$\gamma(\widetilde\rho')$};

\draw[base edge,direction] (b0) -- (b1);
\draw[base edge,direction] (b1) -- (b2);
\draw[base edge] (b2) -- (2.46,-2.75);
\node[font=\small] at (2.67,-2.73) {$\cdots$};
\draw[base edge] (2.90,-2.75) -- (bim);
\draw[base edge,direction] (bim) -- (bi);
\draw[base edge] (bi) -- (5.30,-2.75);
\node[font=\small] at (6.12,-2.73) {$\cdots$};
\draw[base edge] (6.94,-2.75) -- (blm);
\draw[base edge,direction] (blm) -- (bw);
\draw[base edge,direction] (bw) -- (bn);
\draw[base edge,direction] (bn) -- (bend);

\draw[lower route] ([yshift=-2.65pt]b0) -- ([yshift=-2.65pt]b2);
\draw[lower route,densely dotted] ([yshift=-2.65pt]b2) -- ([yshift=-2.65pt]bim);
\draw[lower route] ([yshift=-2.65pt]bim) -- ([yshift=-2.65pt]bi);
\draw[lower route] ([yshift=-2.65pt]bi) -- (5.30,-2.84);
\draw[lower route,densely dotted] (5.30,-2.84) -- (6.94,-2.84);
\draw[lower route]
  (6.94,-2.84) -- ([xshift=-4pt,yshift=-2.65pt]bw)
  .. controls +(.06,0) and +(-.05,-.08) .. (bw);

\draw[lower route,direction]
  (bw) -- node[path label,text=reversalred,sloped,above=2pt] {$\overline e_j$} (be4);
\draw[lower route,direction] (be4) -- (be3);
\draw[lower route,densely dotted] (be3) -- (be2);
\draw[lower route,direction] (be2) -- (be1);
\draw[lower route,direction]
  (be1) -- node[path label,text=reversalred,sloped,above=2pt] {$\overline e_1$} (bi);

\foreach \p in {be1,be2,be3,be4}
  \node[inverse vertex] at (\p) {};
\foreach \p in {b0,b1,b2,bim,blm,bn,bend}
  \node[base vertex] at (\p) {};
\foreach \p in {b0,b1,bim}
  \node[base vertex,fill=black] at (\p) {};
\node[lower vertex] at (bw) {};
\node[base vertex,fill=black] at (bi) {};

\node[dart label,above=2.5pt] at ($(b0)!0.5!(b1)$) {$\delta_1$};
\node[dart label,above=2.5pt] at ($(b1)!0.5!(b2)$) {$\delta_2$};
\node[dart label,above=2.5pt] at ($(bim)!0.5!(bi)$) {$\delta_i$};
\node[dart label,above=2.5pt] at ($(blm)!0.5!(bw)$) {$\delta_\ell$};
\node[dart label,below=5pt] at (bw) {$w$};

\end{tikzpicture}
\endgroup
        \caption{The mechanism by which at least half of middle-returning walks are self-intersecting.  The top figure shows the walk associated to simple plan $\rho$ (black edges), and a deviation $\rho'$ at the $i$th index (in blue).  The bottom figure represents the excursion reversal $R(\rho',\rho)=(\tilde \rho', \tilde \rho)$.  The plan $\tilde \rho$ is obtained by ``reversing'' along the excursion, i.e. the plan is changed at each yellow vertex by inverting the derangement there.  Then deviation at $w$ produces $\tilde \rho'$, whose associated walk is drawn in red.  In this diagram $\ell>i$, and $\gamma_k(\rho')$ is simple, but then $\gamma_k(\tilde\rho')$ cannot be -- it self-intersects at the endpoint of $\delta_i$.  
        }
        \label{fig:excursion-reversal}
      \end{figure}

      Suppose that $\rho=(\phi,\delta)$, and that $\rho'=(\phi',\delta)$ deviates at index $i$.  The \emph{excursion} is the sequence of edges $e_1=\phi'(\delta_i), \ldots, e_j:=(\phi')^{\circ j} (\delta_i)$, where $e_j$ has endpoint $w$ on $\gamma_k(\rho)$, but no other $e_\eta$ with $\eta<j$ has endpoint on $\gamma_k(\rho)$.  By the returning assumption, $j\le L$.  We take $R(\rho',\rho)$ to be $(\tilde \rho', \tilde \rho)$, defined as follows.  For each vertex $v$ along the excursion (but not on $\gamma_k(\rho)$), we modify $\rho$ by taking the derangement $\pi$ at $v$ and replacing it with the inverse derangement.  The plan is not changed away from the excursion.  This gives us $\tilde \rho$.  To get $\tilde \rho'$, we modify $\tilde \rho $ only by changing the plan for $w$.  Since we are excluding deviations of type \eqref{item:end}, $w$ is an allowed location to deviate.  And since we are excluding type \eqref{item:start}, there is a (unique) edge $\delta_\ell$ of $\gamma_k(\rho)$ that has $w$ as its endpoint.   We define $\tilde \phi'$ at $w$ so that $\tilde \phi'(\delta_\ell)=\overline{e_j}$, using the distinguished derangement contingency that agrees with this.  

      These definitions mean that $\gamma(\rho')$ and $\gamma(\tilde \rho')$ both traverse the same excursion edges, but in opposite directions.
      
      If $\ell\le i$, i.e. the walk $\gamma(\rho')$ returns to $\gamma(\rho)$ at a vertex traversed no later than the excursion, then $\rho'$ is clearly not $L$-simple.  But otherwise $\tilde \rho'$ is not $L$-simple.  Thus $R$ has the properties claimed above.

    \end{enumerate}
    
      Combining the three cases above, we get that among all deviations of $k$-simple plans starting with $\delta$, at most $1/8+1/2=5/8$ are returning and $L$-simple.  
  
    \end{enumerate}

    Now any deviation is either non-returning or returning. Among all deviations of $k$-simple plans starting with $\delta$, non-returning $L$-simple constitute at most $1/8$ by \eqref{item:non-return}, while returning $L$-simple constitute at most $5/8$ by \eqref{item:return}.  Thus the total proportion of deviations that are $L$-simple is at most $5/8+1/8 = 3/4$, as desired.

\end{proof}

\begin{lemma} \label{lemma:plan-ratio}
  For any $\epsilon>0$, there exists such $C$ such that for $k^2 > Cn$ we have 
  \begin{align}
        \frac{|\Omega^\delta_{simp}(k)|}{|\Omega^\delta|}  < \epsilon.  \label{eq:ratio}
  \end{align}
\end{lemma}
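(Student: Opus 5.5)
We prove Lemma \ref{lemma:plan-ratio} by double counting: we compare the number of $k$-simple plans with the number of plans that are not $k'$-simple for a somewhat shorter length $k'$, via the deviations of Lemma \ref{lemma:packing-reversing}. The key freedom is that we may apply that lemma at a length $m$ smaller than $k$. Write $k = m + L$ with $m \approx k/2$ and $L \approx k/2$. Since $k^2 > Cn$, we get $L > C'n/m$ as soon as $C$ is large compared to $C'$. Every $k$-simple plan is also $m$-simple, so each $k$-simple plan $\rho$ has $(m-1)(d-2)$ deviations at indices $i \le m-1$. Lemma \ref{lemma:packing-reversing}, applied with $m$ in place of $k$, says that among all deviations of $m$-simple plans with start $\delta$, at least a quarter are not $L$-simple. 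Each such deviation $\rho'$ has an associated walk of length $i+L \le k$ that self-intersects, so $\rho'$ is a plan with start $\delta$ that is not $k$-simple. This gives at least $\tfrac14 |\Omega^\delta_{simp}(m)|(m-1)(d-2)$ pairs $(\rho',\rho)$ whose first coordinate is non-$k$-simple.

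Next we bound the multiplicity of the map $(\rho',\rho)\mapsto\rho'$. Given $\rho'$, recovering $\rho$ requires the deviation index $i$ (at most $m$ choices) and the original derangement at the endpoint of $\delta_i$ (at most $R_d$ choices). This crude bound of order $m$ is useless, since it cancels the factor of $m$ gained. So the multiplicity estimate is where the real work lies.

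To beat it, I would not count deviations directly. Instead I would iterate: compare $|\Omega^\delta_{simp}(k)|$ with $|\Omega^\delta_{simp}(m)|$. The claim to aim for is that a definite fraction of the deviations of $m$-simple plans are $m$-simple yet fail to be $k$-simple. This would give
\[
|\Omega^\delta_{simp}(k)| \le (1-c)\,|\Omega^\delta_{simp}(m)|
\]
for some $c = c(d) > 0$. One natural route is to index deviations by the return/self-intersection time. For a non-$L$-simple deviation at index $i$, the first self-intersection occurs at a time $t \in (i, i+L]$. The pair $(\rho', t)$ together with bounded data should determine $i$, because the deviation vertex must lie within distance $L$ before time $t$ on a walk that was simple until $t$. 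Whether this removes the factor of $m$ is exactly the crux; I expect we will need to restrict to deviations with $i$ in a window of length comparable to $L$ so that multiplicities stay bounded.

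Once a contraction of the form $|\Omega_{simp}(m+L)| \le (1-c)|\Omega_{simp}(m)|$ is established whenever $Lm > C'n$, the conclusion follows by iteration. Take lengths $k_0 = k/2 < k_1 < \dots < k_r = k$ with increments $L \approx k/(2r)$. The requirement $Lk_0 > C'n$ reads $k^2 > 4rC'n$, so with $r$ chosen so that $(1-c)^r < \epsilon$ and $C = 4rC'$ we obtain \eqref{eq:ratio}. The main obstacle, then, is the multiplicity bound in the double counting: showing that each non-simple plan arises from only boundedly many pairs (deviation, original simple plan) once deviation indices are localized. I would first try localizing deviations to the last $L$ indices before $k_{s}$ and redoing Lemma \ref{lemma:packing-reversing}'s proportion argument on that window.
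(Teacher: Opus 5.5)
Your proposal does not prove the lemma. The contraction $|\Omega^\delta_{simp}(m+L)|\le(1-c)\,|\Omega^\delta_{simp}(m)|$ and the multiplicity bound are both stated as goals and never established. The route you suggest for the contraction, rerunning Lemma \ref{lemma:packing-reversing} on a window of deviation indices, does not follow from that lemma. The excursion-reversal involution sends a deviation at index $i$ to one at the return index $\ell$, and $\ell$ need not lie in your window.

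More importantly, you have misdiagnosed the crux: you need neither bounded multiplicity nor iteration. The localization you sketch already suffices. Suppose $\rho'$ comes from a non-$L$-simple deviation at index $i$ of a simple plan $\rho$, and let $j$ be the first time with $\gamma_j(\rho')$ non-simple. Then $i\in[j-L,j]$:
\begin{itemize}
\item if $i\ge j$, then $\gamma_j(\rho)=\gamma_j(\rho')$, which is impossible since $\gamma_j(\rho)$ is simple;
\item if $i<j-L$, non-$L$-simplicity gives a self-intersection before time $j$.
\end{itemize}
Since $\rho$ differs from $\rho'$ only in the derangement at one vertex, each $\rho'$ has at most $R_d(L+1)$ preimages. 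What killed your count was choosing $L\approx k/2$, which makes this multiplicity $O(k)$ and cancels the $\Theta(k)$ deviations per plan.

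The fix is to take $L$ to be a small fraction of $k$. Fix $\eta>0$ and choose an integer $L$ with $C'n/k<L<\eta k$; this is possible once $C$ is somewhat larger than $C'/\eta$. Apply Lemma \ref{lemma:packing-reversing} at length $k$ itself. Keep only deviations at indices $i\le k-L$, so that the non-$L$-simple ones are non-$k$-simple. This discards at most a $2\eta$ fraction of deviations, so for $\eta<1/16$ each $k$-simple plan still has on average at least $(k-1)/8\ge k/16$ such deviations. Double counting then gives
\[
|\Omega^\delta| \;\ge\; \frac{(k/16)\,|\Omega^\delta_{simp}(k)|}{R_d(L+1)} \;\ge\; \frac{|\Omega^\delta_{simp}(k)|}{32\,\eta R_d},
\]
and choosing $32\eta R_d<\epsilon$ finishes the proof. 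The gain is the ratio $k/L\approx 1/\eta$ between the $\Theta(k)$ deviations per plan and the $O(L)$ multiplicity. The hypothesis $k^2>Cn$ is exactly what lets $L$ sit between $C'n/k$ and $\eta k$. This one-shot count is the paper's proof.
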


\begin{proof}
  Choose $\eta>0$ small (see later in proof) and take $L$ an integer such that
  \begin{align}
    \label{eq:sandwich}
    \eta k > L > C'n/k,
  \end{align}
  where $C'$ is the constant in Lemma \ref{lemma:packing-reversing} ; there is enough gap between the left and right hand sides above to do this if we take $C$ a little larger than $C'/\eta$.  
  
  We will define a map $D$ that associates to each plan $\rho \in \Omega^\delta_{simp}(k)$ a subset of $\Omega^\delta$, which will consist of some of the plans formed from deviations of $\rho$.  Specifically, we consider deviations $(\rho',\rho)$, where $\rho'$ deviates at some step $1,\ldots, k-L$, and define $D(\rho)$ to be the set of such $\rho'$ that are not $L$-simple; the condition on the deviation step implies that $\gamma_k(\rho')$ is not simple.   We will use $D$ to bound the ratio in \eqref{eq:ratio}.

  First, we want to bound from below the average size of $D(\rho)$, over all $\rho \in \Omega^\delta_{simp}(k)$.  By Lemma \ref{lemma:packing-reversing} at least $1/4$ proportion of deviations are not $L$-simple, but that includes those deviating at any step, while for $D$ we imposed a restriction.  
   However, the proportion of deviations excluded by this restriction is at most $L/(k-1)$, and since $\eta k>L$, even with the imposed restriction we still get at least proportion $1/4 - 2\eta $ of deviations are allowed, and provided $\eta <1/16$, this is at least $1/8$. 
  Thus the average size of $D(\rho)$ is at least $(k-1)/8\ge k/16$.

  Second, we want to control the number of distinct plans that admit a common deviation, i.e. we want for each plan $\rho' \in \Omega^\delta$, a bound on the number of distinct plans $\rho$ for which $\rho' \in D(\rho)$.  To this end, note that if this number is at least one, then $\rho'$ must arise from a deviation of a $k$-simple plan, while $\gamma_k(\rho')$ is not simple.  Consider the smallest $j$ such that $\gamma_j(\rho')$ is not simple.  We claim that for any $\rho$ with $\rho'\in D(\rho)$ we must have that $\rho'$ deviates from $\rho$ at $i\in [j-L,j]$.  In fact, if $i>j$, then $\gamma_j(\rho),\gamma_j(\rho')$ would coincide, but the former is simple, while the latter is not.  On the other hand, if $i<j-L$, then, since $\rho'$ is not $L$-simple, its associated walk $\gamma(\rho')$ would have a self-intersection before step $j-L+L=j$, again a contradiction.  From this claim that we've established, it follows that the number of possible $\rho$ is at most $R_d\cdot (L+1)$, since $\rho, \rho'$ must agree except for the derangement at the single vertex (of which there are $R_d$ choices), and there are at most $j-(j-L)+1 = L+1$ choices of this vertex.

  Now combining the results about the map $D$ from the previous two paragraphs, we get that
  \begin{align*}
    |\Omega^\delta| &\ge \frac{ (k/16) \cdot |\Omega^\delta_{simp}(k)|}{ R_d \cdot (L+1)}\\
               & \ge \frac{(L/\eta)/16 \cdot |\Omega^\delta_{simp}(k)|}{ R_d \cdot (L+1)} \\
               & \ge \frac{ |\Omega^\delta_{simp}(k)|}{ 32\eta R_d},
  \end{align*}
  and so taking $\eta$ such that $32\eta R_d<\epsilon$ gives the desired result.  
\end{proof}

\begin{proof}[Proof of Theorem \ref{thm:main}]
  For $k\ge n$ the statement is trivial, since there are no simple walks of this length.  Now assuming $k<n$, we can apply Lemma \ref{lemma:plan-reduction} and then Lemma \ref{lemma:plan-ratio}
  (the $C$ in the theorem can be taken to be the square root of the $C$ in Lemma \ref{lemma:plan-ratio}).  
\end{proof}

{\footnotesize
\bibliographystyle{amsalpha}
  \bibliography{sources}
}

\end{document}